\newtheorem{thm}{Theorem}[section]
\newtheorem{prop}[thm]{Proposition}
\newtheorem{lem}[thm]{Lemma}
\newtheorem{cor}[thm]{Corollary}
\theoremstyle{definition}
\newtheorem{dfn}[thm]{Definition}
\theoremstyle{remark}
\newtheorem{eg}[thm]{Example}
\theoremstyle{remark}
\newtheorem{rem}[thm]{Remark}
\newcommand{\Hom}{{\rm Hom}}
\newcommand{\Sing}{{\rm Sing}}
\newcommand{\R}{\mathbb{R}}
\newcommand{\J}{\mathcal{J}}
\newcommand{\Cliq}{{\rm Cliq}}
\title{Morphism complexes of sets with relations}
\author{Takahiro Matsushita}
\email{tmatsu@ms.u-tokyo.ac.jp}
\address{Graduate School of the Mathematical Sciences, the University of Tokyo, 3-8-1, Komaba, Meguro-ku, Tokyo, Japan}
\begin{document}

\maketitle

\begin{abstract}
Let $r$ be a positive integer. An $r$-set is a pair $X= (V(X),R(X))$ consisting of  a set $V(X)$ with a subset $R(X)$ of the direct product $V(X)^r$. The object of this paper is to investigate the Hom complexes of $r$-sets, which were introduced for graphs in the context of the graph coloring problem.

In the first part, we introduce simplicial sets which we call singular complexes, and show that singular complexes and Hom complexes are naturally homotopy equivalent. The second part is devoted to the generalization of $\times$-homotopy theory established by Dochtermann. We show the folding theorem for hypergraphs which was partly proved by Iriye and Kishimoto.
\end{abstract}

\section{Introduction}
One of the most remarkable applications of algebraic topology to combinatorics is Lov$\acute{\rm a}$sz's proof of Kneser's conjecture \cite{Lov}. He assigned a simplicial complex to a graph, and related its connectivity to the chromatic number. Hom complex was also introduced by Lov$\acute{\rm a}$sz in the context of the graph coloring problem, and was later developed by Babson and Kozlov in \cite{BK1} and \cite{BK2}.

The object of the paper is to investigate Hom complexes. As was mentioned in \cite{Koz}, Hom complexes can be defined not only for graphs but for more general objects. In fact, Hom complexes of $r$-uniform hypergraphs were recently considered in \cite{IK} and \cite{Tha}. Thus we consider Hom complexes of more general objects, namely, $r$-{\it sets}.

Throughout this paper, $r$ shall denote a fixed positive integer. An $r$-set is a pair $X=(V(X),R(X))$ consisting of a set $V(X)$ with a subset $R(X)$ of the $r$-times direct product of $V(X)$. We call $V(X)$ the vertex set of $X$ and $R(X)$ the $r$-relation of $X$. We note that $V(X)$ may be infinite. 

Let $\mathfrak{S}_r$ denote the symmetric group on the set $\{ 1,\cdots, r\}$. An $r$-uniform hypergraph is an $r$-set $X$ whose $r$-relation is closed under the $\mathfrak{S}_r$-action on $V(X)^r$ by permutation. Therefore an $r$-set is a generalization of an $r$-uniform hypergraph.

As is the case of graphs, we define the Hom complex of $r$-sets in the following way. A map $f: V(X) \rightarrow V(Y)$ is a {\it homomorphism} if $f^{\times r} (R(X)) \subset R(Y)$. A map $\eta : V(X) \rightarrow 2^{V(Y)} \setminus \{ \emptyset\}$ is a {\it multi-homomorphism} if
$$\eta(x_1) \times \cdots \times \eta(x_r) \subset R(Y)$$
for every element $(x_1,\cdots, x_r)$ of $R(X)$. For two multi-homomorphisms $\eta$ and $\eta'$, we write $\eta \leq \eta'$ to indicate that $\eta(v) \subset \eta'(v)$ for all $v \in V(X)$. The {\it Hom complex} $\Hom(X,Y)$ is the poset consisting of multi-homomorphisms from $X$ to $Y$ together with the ordering mentioned above.

The contents of this paper are divided into two parts. In the first part, we construct the simplicial set $\Sing(X,Y)$ which we call {\it singular complex} and relate it to the Hom complex. To give the precise definition of singular complex, we need some preparation.

Let ${\bf Set}_r$ denote the category of $r$-sets whose morphisms are homomorphisms. It will be shown in Section 3 that ${\bf Set}_r$ admits all small limits and colimits. For instance, the product $r$-set $X \times Y$ of two $r$-sets $X$ and $Y$ is defined by
$$V(X \times Y) = V(X) \times V(Y),$$
$$R(X \times Y) = \{ ((x_1,y_1), \cdots, (x_r,y_r)) \; | \; (x_1,\cdots,x_r) \in R(X), (y_1,\cdots,y_r) \in R(Y)\}.$$

For a non-negative integer $n$, we define the $r$-set $\Sigma_n$ by $V(\Sigma_n) = \{ 0,1,\cdots,n\}$ and $R(\Sigma_n) = V(\Sigma_n)^r$. The {\it singular complex} $\Sing(X,Y)$ which one associates to a pair $(X,Y)$ of $r$-sets is the simplicial set defined by
$$\Sing(X,Y)_n = \{ f:X \times \Sigma_n \rightarrow Y \; | \; f \textrm{ is a homomorphism.}\}.$$
In terms of these notions, our principal result is formulated as follows.

\vspace{2mm} \noindent {\bf Theorem 4.1.}
{\it There is a natural homotopy equivalence}
$$\begin{CD}
|\Sing (X,Y)| @>{\simeq}>> |\Hom(X,Y)|.
\end{CD}$$

\vspace{2mm}
Theorem 4.1 gives another description of the homology groups of Hom complexes. Let $C_n(X,Y)$ denote the free abelian group generated by the set of homomorphisms from $X \times \Sigma_n$ to $Y$. The differential $\partial : C_n(X,Y) \rightarrow C_{n-1}(X,Y)$ is obviously defined. Theorem 4.1 implies that the homology group of the complex $C_\bullet (X,Y)$ is isomorphic to the homology group (with integral coefficients) of $\Hom(X,Y)$. This description is similar to the singular homology group of a topological space. This is why we call the simplicial set $\Sing(X,Y)$ the singular complex.

Let ${\bf SSet}$ denote the category of simplicial sets. We note that for an $r$-set $X$, the functor ${\bf Set}_r \rightarrow {\bf SSet}$, $Y \mapsto \Sing(X,Y)$ is an associated functor of the cosimplicial $r$-set $[n] \mapsto X \times \Sigma_n$. As was mentioned, the category ${\bf Set}_r$ of $r$-sets admits all small colimits. Because of this and the well-known fact of simplicial sets (Theorem 2.3), the functor $Y \mapsto \Sing(X,Y)$ has the left adjoint.

The object of the second part which we discuss in Section 5 is to generalize the $\times$-homotopy theory of graphs introduced by Dochtermann \cite{Doc} to $r$-sets. We relate the $\times$-homotopy theory to the homotopy theory of posets and strong homotopy theory of finite simplicial complexes \cite{BM}.

We note that a homomorphism $f:X \rightarrow Y$ between $r$-sets is identified with the multi-homomorphism $x \mapsto \{ f(x)\}$. Recall that two continuous maps $\varphi$ and $\psi$ between two (compactly generated) spaces are homotopic if and only if there is a path joining $\varphi$ to $\psi$ on the function space. From this viewpoint the following definition is quite natural. Two homomorphisms $f,g:X \rightarrow Y$ are {\it strongly homotopic} if $f$ and $g$ belong to the same connected component of $\Hom(X,Y)$.

A vertex $x$ of the $r$-set $X$ is {\it dismantlable} if there is another vertex $y$ of $X$ such that $p_i^{-1}(x) \subset p_i^{-1}(y)$ for $i=1,\cdots, r$, where $p_i : R(X) \rightarrow V(X)$ is the $i$-th projection. Let $X \setminus x$ denote the maximal $r$-subset of $X$ whose vertex set is $V(X) \setminus \{ x\}$. As an application of strong homotopy theory of $r$-sets, we have that if $x$ is a dismantlable vertex of $X$, then the maps
$$i^*:\Hom(X, Y) \mapsto \Hom(X\setminus x,Y)$$
and
$$i_*:\Hom(Y,X \setminus x) \rightarrow \Hom(Y,X)$$
are homotopy equivalences (Theorem 5.6). In the case of graphs, Babson and Kozlov showed that $i^*$ is a homotopy equivalence (Proposition 5.1 of \cite{BK1}), and Kozlov later showed that $i_*$ is a homotopy equivalence \cite{Koz2}. Iriye and Kishimoto showed that $i^*$ is a homotopy equivalence for uniform hypergraphs (Theorem 17 of \cite{IK}). The part $i_*$ is a homotopy equivalence for uniform hypergraphs is a new result.

The strong homotopy type of an $r$-set is determined by its {\it weak core} (Theorem 5.15). A weak core is a homomorphism $i:X' \rightarrow X$ where $i$ is a strong homotopy equivalence and $X'$ has no dismantlable vertices.

We conclude this section by mentioning our terminology. An $r$-uniform hypergraph $X$ is {\it non-degenerate} if for each element $(x_1,\cdots, x_r)$ of $R(X)$, $x_1,\cdots, x_r$ are distinct. In some literature ``$r$-uniform hypergraph" means non-degenerate $r$-uniform hypergraph in our sense. One of the reasons why we employ such terminology is that, as was mentioned in \cite{IK}, we need to admit degeneracies to apply the Hom complexes to the hypergraph coloring problem. The second reason is that the category of non-degenerate uniform hypergraphs does not admit small limits and colimits (see Remark 3.6).

\vspace{2mm}
\noindent {\bf Acknowledgement.} The author is grateful to his supervisor Toshitake Kohno for his enormous support and helpful suggestions. The author expresses his appreciation to Professor Dai Tamaki and Shouta Tounai for their helpful and insightful comments. He also thanks to the anonymous referee for helpful suggestions which improve the paper. The author is supported by the Grant-in-Aid for Scientific Research (KAKENHI No. 25-4699) and the Grant-in-Aid for JSPS fellows. This work was supported by the Program for Leading Graduate Schools, MEXT, Japan.

\section{Preliminaries}
In this section, we review definitions and some properties of abstract simplicial complexes, posets, and simplicial sets following \cite{Hov}, \cite{Koz}, and \cite{May}.

\subsection{Simplicial complex and poset} An {\it (abstract) simplicial complex} is a pair $(V,K)$ consisting of a set $V$ together with a collection of finite subsets of $V$ such that if $\sigma \in K$ and $\tau \subset \sigma$ then $\tau \in K$. Furthermore, we require that $v \in V$ implies $\{ v\} \in K$. The set $V$ is the {\it vertex set} of the simplicial complex $(V,K)$. We call an element of $K$ a {\it simplex}. A simplicial complex is often denoted simply by $K$. In this notation, we write $V(K)$ to indicate the vertex set of $K$. A map $f:V(K) \rightarrow V(K')$ is a {\it simplicial map} if $f(\sigma) \in K'$ for every $\sigma \in K$.

Let $V$ be a set and let $\R^{(V)}$ be the free $\R$-module generated by $V$. We regard $\R^{(V)}$ as a topological space whose topology is induced by finite dimensional $\R$-submodules, and regard an element of $V$ as a point of $\R^{(V)}$ in the usual way. The {\it geometric realization} of the simplicial complex $K$ is the union of the convex hulls in $\R^{(V(K))}$
$$|K| = \bigcup_{\sigma \in K} {\rm conv}(\sigma)$$
of simplices of $K$.

Let $P$ be a partially ordered set (poset, for short). A subset $c$ of $P$ is a {\it chain} in $P$ if the restriction of the ordering of $P$ to $c$ is a total ordering. The order complex of $P$, denoted by $\Delta(P)$, is the simplicial complex whose vertices are elements of $P$ and whose simplices are finite chains in $P$. We write $|P|$ instead of $|\Delta (P)|$, and call it the geometric realization of $P$.

The geometric realization functor allows us to assign topological concepts to posets and simplicial complexes. For example, we call two order preserving maps $f$ and $g$ homotopic if $|f|$ and $|g|$ are homotopic.

Let $K$ be a simplicial complex. The face poset $FK$ of $K$ is the poset of non-empty simplices of $K$ by inclusion. The {\it barycentric subdivision} of $K$ is the order complex of $FK$.

\begin{thm}
There is a natural homeomorphism
$$\begin{CD}
|FK| @>{\cong}>>|K|.
\end{CD}$$
\end{thm}

\begin{thm}[Quillen \cite{Qui}]
Let $f:P \rightarrow Q$ be an order preserving map. If $f^{-1}(Q_{\leq y})$ is contractible for all $y \in Q$, then $f$ is a homotopy equivalence.
\end{thm}

\subsection{Strong homotopy of posets} Let $P$, $Q$ be posets, and let $f,g : P \rightarrow Q$ be order preserving maps. We write $f\leq g$ to indicate that $f(x) \leq g(x)$ for every element $x$ of $P$. Let ${\rm Poset}(P,Q)$ denote the poset consisting of all order preserving maps from $P$ to $Q$ together with the above ordering. Let $P \times Q$ denote the categorical product of posets $P$, $Q$. Namely, the underlying set of $P \times Q$ is the cartesian product of their underlying sets, and the ordering is given by that $(x,y) \leq (x',y')$ if and only if $x \leq x'$ and $y \leq y'$. Then one can verify that there is a natural isomorphism
$${\rm Poset}(P, {\rm Poset}(Q,R)) \cong {\rm Poset} (P \times Q,R)$$
for posets $P$, $Q$, and $R$.

Order preserving maps $f,g : P \rightarrow Q$ are {\it strongly homotopic} if $f$ and $g$ belong to the same connected component of $|{\rm Poset}(P,Q)|$. We write $f \sim_s g$ to mean that $f$ and $g$ are strongly homotopic. It is known that if $f\leq g$, then $f$ and $g$ are homotopic. Hence if $f$ and $g$ are strongly homotopic, then they are homotopic.

An order preserving map $f:P \rightarrow Q$ is a {\it strong equivalence} if there is an order preserving map $g: Q \rightarrow P$ such that $gf \sim_s {\rm id}_P$ and $fg \sim_s {\rm id}_Q$.

The terminology ``strongly homotopic" and ``strong equivalence" are not standard. However, these notions have been known in terms of finite spaces \cite{Bar}. Recall that the category of finite posets and the category of finite $T_0$-spaces are equivalent. From this viewpoint, two order preserving maps $f$ and $g$ are strongly homotopic in our sense if and only if continuous maps associated to $f$ and $g$ are homotopic. The reason why we use such terminology is that a strong equivalence of posets is closely related to strong equivalence of finite simplicial complexes introduced by Barmak and Minian \cite{BM}. For instance, Barmak and Minian show that if an order preserving map $f:P \rightarrow Q$ between finite posets is a strong equivalence, then the associated simplicial map $\Delta(f):\Delta(P) \rightarrow \Delta (Q)$ is a strong equivalence of finite simplicial complexes. Since a strong equivalence between finite simplicial complexes is a simple homotopy equivalence (Proposition 2.5 of \cite{BM}), a simplicial map $\Delta(f):\Delta(P) \rightarrow \Delta(Q)$ associated to a strong equivalence $f$ between finite posets is a simple homotopy equivalence.

\subsection{Simplicial set} For a non-negative integer $n$, we write $[n]$ to mean the linearly ordered set $\{ 0,1,\cdots, n\}$. Let ${\bf \Delta}$ be the small category whose objects are $[n]$ for $n \geq 0$ and whose morphisms are order preserving maps. A {\it simplicial set} is a functor from the opposite category of ${\bf \Delta}$ to the category of sets. Morphisms between two simplicial sets are defined by natural transformations. Let ${\bf SSet}$ denote the category of simplicial sets. For a simplicial set $K$, we write $K_n$ instead of $K [n]$.

The canonical $n$-simplex $\Delta^n$ is the subspace of $\mathbb{R}^{n+1}$ defined by
$$\Delta^n = \Big\{ x_0 e_0 + \cdots + x_n e_n \; | \; x_i \geq 0 \; (i=0,1,\cdots, n), \; \sum_{i=0}^n x_i =1\Big\}$$
where $e_0,\cdots, e_n$ are the canonical basis of $\mathbb{R}^{n+1}$.

The geometric realization of a simplicial set $K$ is defined as follows. First we assign a canonical $n$-simplex $\Delta(\sigma)$ to each element $\sigma$ of $K_n$. The {\it geometric realization} of the simplicial set $K$ is the quotient space
$$\coprod_{n \geq 0, \sigma \in K_n} \Delta(\sigma)/ \sim$$
where the equivalence relation $\sim$ is generated by the relation
$$\Delta(f^* \sigma) \ni x_0e_0 + \cdots + x_n e_n \sim x_0 e_{f(0)} + \cdots + x_n e_{f(n)} \in \Delta (\sigma), \; (f:[n] \rightarrow [m]).$$

Let $\mathcal{C}$ be a category. A {\it cosimplicial object} of the category $\mathcal{C}$ is a functor from ${\bf \Delta}$ to $\mathcal{C}$. Let $A^\bullet : {\bf \Delta} \rightarrow \mathcal{C}$ be a cosimplicial object of $\mathcal{C}$. The functor $\mathcal{C}(A^\bullet,-) : \mathcal{C} \rightarrow {\bf SSet}$ is defined by $\mathcal{C}(A^\bullet, X)_n = \mathcal{C}(A^n, X)$.

\begin{thm}[Proposition 3.1.5 of \cite{Hov}]
If the category $\mathcal{C}$ admits all small colimits, then the functor $\mathcal{C}(A^\bullet, -)$ has the left adjoint.
\end{thm}

\subsection{Gluing lemma}

We will need the following theorem in Section 4.

\begin{thm}
Let $X$ and $Y$ be CW-complexes and let $A$ be a set. Let $(X_\alpha)_{\alpha \in A}$ (or $(Y_{\alpha})_{\alpha \in A}$) be an $A$-indexed family of subcomplexes of $X$ (or $Y$) which is a covering of $X$ (or $Y$ respectively). Let $f: X \rightarrow Y$ be a continuous map $X \rightarrow Y$ such that $f(X_\alpha) \subset Y_\alpha$ for all $\alpha \in A$. Suppose that for any finite subset $\sigma \in A$, the map
$$f|_{\bigcap_{\alpha \in \sigma} X_\alpha} : \bigcap_{\alpha \in \sigma} X_\alpha \rightarrow \bigcap_{\alpha \in \sigma} Y_\alpha$$
is a homotopy equivalence. Then the map $f$ is a homotopy equivalence.
\end{thm}
\begin{proof}
This theorem is well-known if $A$ is finite (see Section 15.5.1 of \cite{Koz}). Hence we only deal with the infinite case. First we introduce the notation. For a subset $\sigma$ of $A$, we write $X_\sigma$ (or $Y_\sigma$) to indicate the union $\bigcup_{\alpha \in \sigma} X_\alpha$ (or $\bigcup_{\alpha \in \sigma} Y_\alpha$ respectively). It follows from the finite case that $X_\sigma \rightarrow Y_\sigma$ is a homotopy equivalence.

We can assume that $X$ and $Y$ are non-empty. Let $x \in X$. It is enough to show that $\pi_n(X,x) \rightarrow \pi_n(Y,f(x))$ is bijective for $n \geq 0$. Let $\varphi :(S^n, *) \rightarrow (Y,f(x))$ be a continuous map. Since $\varphi (S^n)$ is compact there is a finite subset $\sigma \subset A$ such that
$$x \in X_\sigma, \varphi (S^n) \subset Y_\sigma.$$
Since $f|_{X_\sigma *} : \pi_n(X_\sigma,x) \rightarrow \pi_n(Y_\sigma,f(x))$ is bijective, there is $\psi : (S^n,*) \rightarrow (X_\sigma, x)$ such that $f \circ \psi \simeq \varphi$. This implies that $f_* : \pi_n(X,x) \rightarrow \pi_n(Y,f(x))$ is surjective. The injectivity of $f_*$ is similarly obtained.
\end{proof}

\section{Limits and colimits}

Let ${\bf Set}_r$ be the category of $r$-sets and let ${\bf Graph}_r$ be the category of $r$-uniform hypergraphs. The aim of this section is to show that ${\bf Set}_r$ and ${\bf Graph}_r$ admit all small limits and colimits. First we deal with the case of $r$-sets.

Throughout this section $\mathcal{J}$ shall denote a small category. We typically write $j \in \mathcal{J}$ to indicate that $j$ is an object of $\mathcal{J}$. Let $\varphi :\mathcal{J} \rightarrow {\bf Set}_r$ be a functor. The limit $\lim (\varphi) \in {\bf Set}_r$ of $\varphi$ is defined by
$$V(\lim (\varphi)) = \Big\{ (x_j)_{j \in \J} \in \prod_{j \in \J} V(\varphi(j)) \; | \; \varphi(u)(x_{j_0}) = x_{j_1} \textrm{ for a morphism $u: j_0 \rightarrow j_1$ in }\J.\Big\},$$
$$R(\lim (\varphi)) = \{ ((x_j^1)_{j \in \J},\cdots, (x_j^r)_{j \in \J}) \; | \; (x_j^1,\cdots,x_j^r) \in R(\varphi(j)) \textrm{ for }j \in \J.\}.$$

Let $J$ be a small set and let $(X_j)_{j \in J}$ be a $J$-indexed family of $r$-sets. The coproduct $\coprod_{j \in J}X_j$ is defined by
$$V \Big( \coprod_{j \in J} X_j \Big) = \coprod_{j \in J} V(X_j),$$
$$R \Big( \coprod_{j \in J} X_j \Big) = \coprod_{j \in J} R(X_j).$$

Let $X$ be an $r$-set and let $\sim$ be an equivalence relation on $V(X)$. The quotient $r$-set $X/\sim$ is defined by
$$V(X/\sim) = V(X)/\sim,$$
$$R(X/\sim) = \{ (\alpha_1,\cdots, \alpha_r) \; | \; (\alpha_1 \times \cdots \times \alpha_r) \cap R(X) \neq \emptyset.\}.$$
Then the quotient map $\pi : V(X) \rightarrow V(X/\sim)$ is a homomorphism. Furthermore, this has the following universality.

\begin{lem}
Let $f:X \rightarrow Y$ be a homomorphism such that if $x \sim y$ then $f(x) = f(y)$. Then there is a unique homomorphism $\overline{f}: (X/\sim) \rightarrow Y$ satisfying $\overline{f} \circ \pi = f$.
\end{lem}
\begin{proof}
It suffices to show that the set map $\overline{f}$ induced by the set map $f: V(X) \rightarrow V(Y)$ is a homomorphism. Let $(\alpha_1,\cdots, \alpha_r) \in R(X/\sim)$ and let $x_i \in \alpha _i$ $(i=1,\cdots, r)$ such that $(x_1,\cdots, x_r) \in R(X)$. Then we have
$$\overline{f}(\alpha_1,\cdots, \alpha_r) = f(x_1,\cdots, x_r) \in R(Y).$$
Therefore the map $\overline{f}$ is a homomorphism of $r$-sets.
\end{proof}

Let us construct the colimit of the functor $\varphi: \mathcal{J} \rightarrow {\bf Set}_r$. Let $\sim_\varphi$ denote the equivalence relation on the vertex set of the coproduct $\coprod_{j \in \J} \varphi (j)$ generated by the relations: $x \sim_\varphi \varphi(u)(x)$ for $x \in \varphi(j_0)$ and a morphism $u:j_0 \rightarrow j_1$ in $\J$. Then the colimit of $\varphi$ is defined by
$${\rm colim}(\varphi) = \coprod_{j \in \J} \varphi(j) / \sim_\varphi.$$

\begin{thm}
The category ${\bf Set}_r$ of $r$-sets admits all small limits and colimits.
\end{thm}

Next we deal with the category ${\bf Graph}_r$ of $r$-uniform hypergraphs.

\begin{dfn}
Let $X$ be an $r$-set.
\begin{itemize}
\item[(1)] Let $FX$ denote the $r$-uniform hypergraph defined by $V(FX) = V(X)$ and
$$R(FX) = \{ (x_1,\cdots, x_r) \; | \; \textrm{There is $\sigma \in \mathfrak{S}_r$ such that }(x_{\sigma(1)},\cdots ,x_{\sigma(r)}) \in R(X).\}$$
\item[(2)] Let $UX$ denote the $r$-uniform hypergraph defined by $V(UX) = V(X)$ and
$$R(UX) = \{ (x_1,\cdots,x_r) \; | \; (x_{\sigma(1)},\cdots, x_{\sigma(r)}) \in R(X)\textrm{ for every }\sigma \in \mathfrak{S}_r.\} $$
\end{itemize}
\end{dfn}

For a homomorphism $f:X \rightarrow Y$ of $r$-sets, we put $Ff = Uf = f$. Then $F$ and $U$ are functors from ${\bf Set}_r$ to ${\bf Graphs}_r$. Let $\iota$ denote the inclusion functor ${\bf Graph}_r \hookrightarrow {\bf Set}_r$. Then we have the following proposition.

\begin{prop}
The functor $F$ is the left adjoint of $\iota$ and the functor $U$ is the right adjoint of $\iota$.
\end{prop}
\begin{proof}
Let $X$ be an $r$-set and let $Y$ be an $r$-uniform hypergraph. Let $f:X \rightarrow Y$ be a homomorphism. We want to show that $f:V(FX) = V(X) \rightarrow V(Y)$ is again a homomorphism from $FX$ to $Y$. Let $(x_1,\cdots,x_r) \in R(FX)$. Then there is $\sigma \in \mathfrak{S}_r$ such that $(x_{\sigma(1)},\cdots, x_{\sigma(r)}) \in R(X)$. We have $(f(x_{\sigma(1)}),\cdots, f(x_{\sigma(r)})) \in R(Y)$ since $f$ is a homomorphism. Since $Y$ is an $r$-uniform hypergraph, we have $(f(x_1),\cdots, f(x_r)) \in R(Y)$.

Next let $g$ be a homomorphism from $Y$ to $X$. We want to show that $g:V(Y) \rightarrow V(X) = V(UX)$ is a homomorphism from $Y$ to $UX$. Let $(y_1,\cdots,y_r)\in R(Y)$. Then we have that $(y_{\sigma(1)},\cdots, y_{\sigma(r)}) \in R(Y)$ for each $\sigma \in \mathfrak{S}_r$. Hence we have $(f(y_{\sigma(1}),\cdots, f(y_{\sigma(r)})) \in R(X)$ for each $\sigma \in \mathfrak{S}_r$. This implies that $(f(y_1),\cdots, f(y_r)) \in R(X)$.
\end{proof}

\begin{cor}
The category ${\bf Graph}_r$ of $r$-uniform hypergraphs admits all small limits and colimits.
\end{cor}
\begin{proof}
Let $\varphi:\J \rightarrow {\bf Graphs}_r$ be a functor. For each $r$-uniform hypergraph $X$, we have
\begin{eqnarray*}
{\bf Graphs}_r (X, U(\lim \iota \varphi)) &\cong & {\bf Set}_r (\iota X, \lim \iota \varphi)\\
&\cong & \lim_{j} ({\bf Set}_r(\iota X, \iota \varphi(j)))\\
&\cong & \lim_{j} ({\bf Graphs}_r (X, \varphi (j))).
\end{eqnarray*}
This implies that $U(\lim (\iota \varphi ))$ is the limit of $\varphi$. We have similarly that $F({\rm colim} (\iota \varphi ))$ is the colimit of $\varphi$.
\end{proof}

\begin{rem}
As was mentioned in Section 1, an $r$-uniform hypergraph $X$ is non-degenerate if for each element $(x_1,\cdots, x_r)$ of $R(X)$, $x_1,\cdots, x_r$ are distinct. Let $\mathcal{G}$ be the full subcategory of the category ${\bf Graph}_r$ consisting of non-degenerate $r$-uniform hypergraphs. Then $\mathcal{G}$ does not admit finite limits and finite colimits.

In fact $\mathcal{G}$ does not admit finite limits since $\mathcal{G}$ does not have the terminal object. On the other hand, let $K_r$ be the $r$-uniform hypergraph defined by $V(K_r) = \{ 1,\cdots, r\}$ and $R(K_r) = \{ (x_1,\cdots, x_r) \; | \; x_i \neq x_j \; (i \neq j)\}$. We regard the symmetric group $\mathfrak{S}_r$ as a small category in a usual way. Namely, the object of $\mathfrak{S}_r$ is only one $*$ and the morphism set from $*$ to $*$ is the group $\mathfrak{S}_r$. Let $\varphi : \mathfrak{S}_r \rightarrow \mathcal{G}$ be the functor defined by $\varphi (*) = K_r$ and $\varphi (\sigma) (x) = \sigma(x)$ for $\sigma \in \mathfrak{S}_r$. This functor does not have the colimit.
\end{rem}

\section{Singular complex}

Recall that the singular complex is defined by the right adjoint functor $\Sing(X,-) : {\bf Set}_r \rightarrow {\bf SSet}$ associated to the cosimplicial $r$-set
$${\bf \Delta} \rightarrow {\bf Set}_r, [n] \mapsto X \times \Sigma_n$$
for an $r$-set $X$. Namely, the singular complex $\Sing(X,Y)$ is the simplicial set
$$\Sing(X,Y)_n = \{ f:X \times \Sigma_n \rightarrow Y \; | \; f\textrm{ is a homomorphism of $r$-sets.}\}$$
with obvious face maps and degeneracy maps. The aim of this section is to show the following theorem.

\begin{thm}
There is a natural homotopy equivalence
$$\begin{CD}
|\Sing(X,Y)| @>{\simeq}>> |\Hom(X,Y)|.
\end{CD}$$
\end{thm}

Let $X$ and $Y$ be $r$-sets. A multi-homomorphism $\eta \in \Hom(X,Y)$ is finite if $\eta (x)$ is finite for each $x \in V(X)$. The induced subposet of $\Hom(X,Y)$ consisting of all finite multi-homomorphisms is denoted by $\Hom_f(X,Y)$.

If $X$ and $Y$ are finite $r$-sets then $\Hom_f(X,Y) = \Hom(X,Y)$. In general the inclusion $\Hom_f(X,Y) \hookrightarrow \Hom(X,Y)$ is a homotopy equivalence. This fact is deduced from the following lemma and Quillen's Theorem A (Theorem 2.2).

\begin{lem}
Let $P$ be a poset. If there is an upper bound for every finite subset of $P$, then $P$ is contractible.
\end{lem}
\begin{proof}
Since the empty subset has an upper bound, $P$ is not empty. By the hypothesis, every finite subcomplex of $\Delta(P)$ is included in a certain contractible subcomplex. This implies that a map from a sphere to $|\Delta (P)|$ is null-homotopic, and hence $P$ is contractible by the Whitehead theorem.
\end{proof}

\begin{dfn}
Let $X$ and $Y$ be $r$-sets. The {\it morphism $r$-set} $Y^X$ is defined by
$$V(Y^X) = \{ f:V(X) \rightarrow V(Y) \; | \; f \textrm{ is a map of sets.}\},$$
$$R(Y^X) = \{ (f_1,\cdots, f_r) \; | \; (f_1 \times \cdots \times f_r)(R(X)) \subset R(Y)\}.$$
\end{dfn}

It can be verified that there is a natural isomorphism ${\bf Set}_r(X \times Y,Z) \cong {\bf Set}_r(X, Z^Y)$. Hence we have $\Sing(X \times Y,Z) \cong \Sing(X,Z^Y)$. On the other hand the following holds in the case of Hom complexes.

\begin{lem}
There is a natural strong equivalence
$$\Hom(X,Z^Y) \rightarrow \Hom(X \times Y,Z).$$
\end{lem}
\begin{proof}
Dochtermann proved this lemma in the case of graphs (Proposition 3.5 of \cite{Doc}) although he did not use the term ``strong equivalence." A similar proof works well. Hence we give a sketch of the proof.

The maps $\Phi : \Hom (X \times Y, Z) \rightarrow \Hom (X,Z^Y)$ and $\Psi : \Hom (X,Z^Y) \rightarrow \Hom(X \times Y,Z)$ are defined by
$$\Phi (\eta)(x) = \{ f:V(Y) \rightarrow V(Z) \; | \; f(y) \in \eta(x,y) \textrm{ for }y \in V(Y).\},$$
$$\Psi(\eta) (x,y) = \{ f(y) \; | \; f \in \eta (x)\}.$$
Then one can show that $\Psi \circ \Phi = {\rm id}$, and $\Phi \circ \Psi \geq {\rm id}$.
\end{proof}

Since $\Sigma_0 \times X \cong X$, we have that $\Sing(X,Y) \cong \Sing(\Sigma_0, Y^X)$ and $\Hom(X,Y) \simeq \Hom(\Sigma_0,Y^X) \simeq \Hom_f(\Sigma_0, Y^X)$. Hence it suffices to construct a homotopy equivalence $|\Sing(\Sigma_0,X)| \rightarrow |\Hom_f(\Sigma_0, X)|$.

A subset $A$ of an $r$-set $X$ is a {\it clique} if $A^r$ is included in $R(X)$. The {\it clique complex} $\Cliq (X)$ is the simplicial complex whose simplices are finite cliques of $X$. Since $\Hom_f(\Sigma_0,X)$ is isomorphic to the face poset of $\Cliq (X)$, there is a homeomorphism $|\Cliq(X)| \rightarrow |\Hom_f(\Sigma_0,X)|$.

We write $\Sing(X)$ to mean the singular complex $\Sing(\Sigma_0,X)$. Corresponding to an $n$-simplex $\sigma$ of $\Sing(X)$, $\Delta_\sigma$ denotes the canonical $n$-simplex in $\R^{n+1}$. Define the map $\varphi_\sigma: \Delta_\sigma \rightarrow |\Cliq (X)|$ by
$$t_0 e_0 + \cdots + t_n e_n \mapsto t_0 \sigma(0) + \cdots + t_n \sigma(n).$$
For an order preserving map $f:[n] \rightarrow [m]$, one can verify the following diagram is commutative.
$$\begin{CD}
\Delta_{f^* \sigma} @>{f_*}>> \Delta_{\sigma}\\
@V{\varphi_{f^*\sigma}}VV @VV{\varphi_\sigma}V\\
|\Cliq (X)| @= |\Cliq (X)|,
\end{CD}$$
where $f_*(t_0 e_0 + \cdots + t_n e_n) = t_0 e_{f(0)} + \cdots + t_n e_{f(n)}$. Hence these $\varphi_\sigma$ induce a continuous map $\varphi_X: |\Sing(X)| \rightarrow |\Cliq(X)|$. To prove $\varphi_X$ is a homotopy equivalence, we need the following lemma.

\begin{lem}
If $X$ is a non-empty clique, then $|\Sing(X)|$ is contractible.
\end{lem} 
\begin{proof}
We note that if a homomorphism $f:X \rightarrow Y$ between $r$-sets is constant then $|\Sing (f)| : |\Sing(X)| \rightarrow |\Sing(Y)|$ is again constant. This is deduced from the fact that $\Sing(\Sigma_0)$ is a point.

Suppose that $X$ is a non-empty clique. It is clear that $|\Sing(X)|$ is connected. Let $x_0 \in V(X)$ and let $f:X \times \Sigma_1 \rightarrow X$ be the map
$$f(x,i) = \begin{cases}
x & (i=0)\\
x_0 & (i=1).
\end{cases}$$
Let $\iota_k : \Sigma_0 \rightarrow \Sigma_1$ $(k=0,1)$ be the homomorphism mapping $0$ to $k$. Then $f \circ \iota_0 = {\rm id}_X$ and $f \circ \iota_1$ is the constant homomorphism $x \mapsto x_0$ $(x \in V(X))$. Since $\Sing(X \times \Sigma_1) \cong \Sing(X) \times \Sing(\Sigma_1)$ and $\Sing(\Sigma_1)$ is connected, we have that the identity of $|\Sing (X)|$ is null-homotopic.
\end{proof}

Lemma 4.5 implies that $\varphi_X |_A:|\Sing (A)| \rightarrow |\Cliq (A)|$ is a homotopy equivalence for a finite clique $A$ which may be empty. If $A_1 ,\cdots, A_n$ be a family of finite cliques of $X$, then $A_1 \cap \cdots \cap A_n$ is also a clique. Therefore the map
$$\varphi_X |_{|\Sing(A_1)| \cap \cdots \cap |\Sing(A_n)|} : |\Sing(A_1)| \cap \cdots \cap |\Sing(A_n)| \rightarrow |\Cliq (A_1)| \cap \cdots \cap |\Cliq (A_n)|$$
is again a homotopy equivalence. By gluing homotopy equivalences (Theorem 2.4), we have that $\varphi_X$ is a homotopy equivalence. This completes the proof of Theorem 4.1.

We conclude this section by giving a few remarks. Recall that a homomorphism of $r$-sets is identified with a minimal point of $\Hom (X,Y)$ and with a vertex of $\Sing(X,Y)$. By chasing the proof carefully, one can show that the constructed homotopy equivalence preserves homomorphisms of $r$-sets.

Let $X$ be an $r$-set. The functor ${\bf Set}_r \rightarrow {\bf SSet}, Y \mapsto \Sing(X,Y)$ is a right adjoint functor by Theorem 2.3 and Theorem 3.2. Since the inclusion functor is a right adjoint functor (Proposition 3.4), the functor ${\bf Graph}_r \rightarrow {\bf SSet}$, $Y \mapsto \Sing(X,Y)$ is also a right adjoint functor. In particular, this functor preserves limits. On the other hand, $|\Hom(X,\lim Y)|$ and $|\lim \Hom(X,Y)|$ are not isomorphic but homotopy equivalent (see Proposition 3.7 in \cite{Doc}).

\section{Strong homotopy theory of $r$-sets}

Let $f,g:X \rightarrow Y$ be homomorphisms of $r$-sets. As was mentioned in Section 1, $f$ and $g$ are {\it strongly homotopic} if they belong to the same connected component of $\Hom(X,Y)$. We write $f \sim_s g$ to mean that $f$ and $g$ are strongly homotopic.

Most of results in this section are known for graphs as the $\times$-homotopy theory by Dochtermann \cite{Doc}. However, we relate the strong homotopy theory of $r$-sets to the strong homotopy theory of posets and finite simplicial complexes. For the sake of our treatment, we have that a strong homotopy equivalence $f:X \rightarrow Y$ induces strong equivalences $\Hom (Z,X) \rightarrow \Hom(Z,Y)$ and $\Hom(Y,Z) \rightarrow \Hom(X,Z)$. Furthermore, we have an alternative proof of the folding theorem (Theorem 5.6). The notion of a weak core is new. This notion is easier to handle than a core. Indeed, the associated proposition for cores of Lemma 5.14 is not trivial and is open.

Let $X$, $Y$, and $Z$ be $r$-sets. The composition map
$$* : \Hom(Y,Z) \times \Hom(X,Y) \rightarrow \Hom(X,Z)$$
is defined by
$$(\tau * \eta)(x) = \bigcup_{y \in \eta(x)} \tau (y).$$

It is easy to verify that $\tau * f = f^* (\tau)$ and $g * \sigma = g_*(\sigma)$ for homomorphisms $f:X \rightarrow Y$ and $g:Y \rightarrow Z$.

\begin{prop}
Let $X$, $Y$, and $Z$ be $r$-sets. Suppose that two homomorphisms $f,g:X \rightarrow Y$ are strongly homotopic. Then the following hold.
\begin{itemize}
\item[(1)] Poset maps $f_*,g_* :\Hom(Z,X) \rightarrow \Hom(Z,Y)$ are strongly homotopic.
\item[(2)] Poset maps $f^*,g^*:\Hom(Y,Z) \rightarrow \Hom(X,Z)$ are strongly homotopic.
\end{itemize}
\end{prop}
\begin{proof}
Let
$$\Phi : \Hom (X,Y) \rightarrow {\rm Poset}(\Hom(Z,X), \Hom(Z,Y))$$
be the adjoint of the composition map $*$. One can show that $\Phi(f) = f_*$ and $\Phi(g) = g_*$. Since $f$ and $g$ belong to the same connected component of $\Hom(X,Y)$, we have that $f_*$ and $g_*$ are strongly homotopic. The proof of (2) is similarly obtained.
\end{proof}

\begin{cor}
Let $f:X \rightarrow Y$ be a homomorphism of $r$-sets. Then the following are equivalent.
\begin{itemize}
\item[(1)] The homomorphism $f$ is a strong homotopy equivalence.
\item[(2)] For each $r$-set $Z$, the poset map $f_* : \Hom(Z,X) \rightarrow \Hom(Z,Y)$ is a homotopy equivalence.
\item[(3)] For each $r$-set $Z$, the poset map $f_* : \Hom(Z,X) \rightarrow \Hom(Z,Y)$ is a strong homotopy equivalence of posets.
\end{itemize}
A similar result holds for $f^*$.
\end{cor}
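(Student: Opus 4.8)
The plan is to prove $(1)\Leftrightarrow(2)$ and $(1)\Leftrightarrow(3)$ separately. Since $(1)\Leftrightarrow(3)$ is dual to $(1)\Leftrightarrow(2)$ — interchange $f_*$ with $f^*$ and $\Hom(Z,-)$ with $\Hom(-,Z)$ — the real content is $(1)\Leftrightarrow(2)$. The implication $(1)\Rightarrow(2)$ is formal: if $g:Y\rightarrow X$ is a strong homotopy inverse of $f$, then functoriality of $\Hom(Z,-)$ gives $g_*f_*=(gf)_*$ and $f_*g_*=(fg)_*$, while Lemma 4.3(1) turns $gf\simeq_S{\rm id}_X$ and $fg\simeq_S{\rm id}_Y$ into $(gf)_*\simeq{\rm id}$ and $(fg)_*\simeq{\rm id}$; hence $f_*:\Hom(Z,X)\rightarrow\Hom(Z,Y)$ is a homotopy equivalence with inverse $g_*$. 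Likewise $(1)\Rightarrow(3)$ follows from functoriality of $\Hom(-,Z)$ together with Lemma 4.3(2).

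For the converses I will first record that for any $r$-sets $A,B$ every $\eta\in\Hom(A,B)$ dominates a map of $r$-sets: choosing $h(a)\in\eta(a)$ for each $a\in V(A)$ (possible as $\eta(a)\neq\emptyset$) yields a map $h:A\rightarrow B$ of $r$-sets with $h\leq\eta$, since $(h(a_1),\dots,h(a_r))\in\eta(a_1)\times\cdots\times\eta(a_r)\subset R(B)$ for $(a_1,\dots,a_r)\in R(A)$; in particular every connected component of $\Hom(A,B)$ contains a map of $r$-sets. Now assume (2). Applying it with $Z=Y$, the map $f_*:\Hom(Y,X)\rightarrow\Hom(Y,Y)$ is a homotopy equivalence, hence a bijection on connected components, so some component of $\Hom(Y,X)$ maps into the component of ${\rm id}_Y$; choosing a map of $r$-sets $g:Y\rightarrow X$ there gives $fg=f_*(g)\simeq_S{\rm id}_Y$. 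For the other relation, apply the poset map $f^*:\Hom(Y,Y)\rightarrow\Hom(X,Y)$, which preserves connected components: since $f^*(fg)=fgf$ and $f^*({\rm id}_Y)=f$, the elements $fgf$ and $f$ lie in one component of $\Hom(X,Y)$. But $fgf=f_*(gf)$ and $f=f_*({\rm id}_X)$ for $f_*:\Hom(X,X)\rightarrow\Hom(X,Y)$, which by (2) with $Z=X$ is a homotopy equivalence and hence injective on components; therefore $gf\simeq_S{\rm id}_X$, and $f$ is a strong homotopy equivalence. The implication $(3)\Rightarrow(1)$ is the symmetric argument: (3) with $Z=X$ produces $g:Y\rightarrow X$ with $gf=f^*(g)\simeq_S{\rm id}_X$, then pushing this forward along $f_*:\Hom(X,X)\rightarrow\Hom(X,Y)$ and using that $f^*:\Hom(Y,Y)\rightarrow\Hom(X,Y)$ is injective on components yields $fg\simeq_S{\rm id}_Y$.

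The main obstacle is the converse direction. One difficulty is producing an honest morphism of $r$-sets $g$ out of a connected component that is only known to exist abstractly through the $\pi_0$-bijection, which the domination observation settles. The more delicate point is obtaining the second relation ($gf\simeq_S{\rm id}_X$, resp. $fg\simeq_S{\rm id}_Y$) from the first for free; this rests on the $\pi_0$-injectivity of $f_*$ (resp. $f^*$) on a second Hom-poset together with the compatibility $f^*(fg)=fgf=f_*(gf)$ between the two functorial constructions, and some care is needed to keep straight which $r$-sets play which role in each occurrence of $f_*$ and $f^*$. By contrast $(1)\Rightarrow(2)$ and $(1)\Rightarrow(3)$ are immediate given Lemma 4.3.
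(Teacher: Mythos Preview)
Your argument is correct and follows essentially the same route as the paper: $(1)\Rightarrow(2)$ via functoriality and Lemma~4.2, and $(2)\Rightarrow(1)$ by using surjectivity of $\pi_0(f_*)$ with $Z=Y$ to produce $g$ with $fg\simeq_S{\rm id}_Y$, then injectivity of $\pi_0(f_*)$ with $Z=X$ to deduce $gf\simeq_S{\rm id}_X$ from $fgf\simeq_S f$. Your extra remark that every component of $\Hom(A,B)$ contains an actual map of $r$-sets is a helpful detail the paper leaves implicit; note also that what you cite as ``Lemma 4.3'' is Lemma~4.2 in the paper's numbering.
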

\begin{proof}
By Lemma 5.1, $(1)$ implies $(3)$. It is clear that $(3)$ implies $(2)$.

Suppose that the condition (2) holds. Since $\pi_0(f_*) :\pi_0 (\Hom(Y,X)) \rightarrow \pi_0(\Hom(Y,Y))$ is surjective, there is a homomorphism $g: Y \rightarrow X$ such that $f g \sim_s {\rm id}_Y$. On the other hand we can deduce $gf \sim_s {\rm id}_X$ from $fgf \sim_s f$ and the injectivity of $\pi_0(f_*)$. Therefore $f$ is a strong homotopy equivalence.
\end{proof}

Corresponding to a non-negative integer $n$, the $r$-set $I_n$ is defined by
$$V(I_n) = \{ 0,1,\cdots, n\},$$
$$R(I_n) = \{ (x_1,\cdots, x_r) \; | \; \textrm{There is $k \in \{ 1,\cdots, n\} $ such that } \{ x_1,\cdots,x_r\} \subset \{ k-1,k\}.\}.$$
We note that $I_1$ coincides with $\Sigma_1$.

\begin{prop}
Let $f,g:X \rightarrow Y$ be homomorphisms of $r$-sets. Then $f$ and $g$ are strongly homotopic if and only if there are a non-negative integer $n$ and a homomorphism $H: X \times I_n \rightarrow Y$ such that $H(x,0) = f(x)$ and $H(x,n) = g(x)$ for every $x \in V(X)$.
\end{prop}
\begin{proof}
Note that by Theorem 4.1 and a remark given in the end of Section 4, $f$ and $g$ are strongly homotopic if and only if $f$ and $g$ belong to the same connected component of $\Sing (X,Y)$. The proposition follows from the fact that for a pair of homomorphisms $\varphi, \psi:X \rightarrow Y$, a 1-simplex of $\Sing (X,Y)$ joining $\varphi$ to $\psi$ is the homomorphism $H:X \times I_1 \rightarrow Y$ such that $H(x,0) = \varphi (x)$ and $H(x,1) = \psi (x)$ for all $x \in V(X)$.
\end{proof}

\begin{dfn}
Let $X$ be an $r$-set. A vertex $x$ of $X$ is {\it dismantlable} if there is $y \in V(X) \setminus \{ x\}$ such that $p_i^{-1}(x) \subset p_i^{-1}(y)$ for $i \in \{ 1,\cdots, r\}$ where $p_i:R(X) \rightarrow V(X)$ is the $i$-th projection.
\end{dfn}

Let $X$ be an $r$-set. An $r$-subset of $X$ is an $r$-set $Y$ such that $V(Y) \subset V(X)$ and $R(Y) \subset R(X)$. An induced $r$-subset $Y$ of $X$ is an $r$-subset such that $R(Y) = V(Y)^r \cap R(X)$. Let $X$ be an $r$-set and let $x$ be a vertex of $X$. The induced $r$-subset of $X$ consisting of all vertices of $X$ except for $x$ is denoted by $X \setminus x$.

\begin{lem}
Let $X$ be an $r$-set and let $x$ be a vertex of $X$. If $x$ is dismantlable then the inclusion $i:X \setminus x \hookrightarrow X$ is a strong homotopy equivalence.
\end{lem}
\begin{proof}
Let $y \in V(X) \setminus \{ x\}$ such that $p_i^{-1}(x) \subset p_i^{-1}(y)$ for $i \in \{ 1,\cdots, r\}$. Let $f:V(X) \rightarrow V(X) \setminus \{ x\}$ be the map
$$f(v)=
\begin{cases}
v & (v \neq x)\\
y & (v=x).
\end{cases}
$$
Then $f$ is a homomorphism of $r$-sets and $f i$ is the identity of $X \setminus x$. Let $\eta : V(X) \rightarrow 2^{V(Y)} \setminus \{ \emptyset\}$ be the map $\eta(v) = \{ v,f(v)\}$. It is easy to verify that $\eta$ is a multi-homomorphism. Since ${\rm id}_X \leq \eta$ and $f \leq \eta$, $if$ and ${\rm id}_X$ are strongly homotopic.
\end{proof}

By Corollary 5.2 and Lemma 5.5, we have the following theorem.

\begin{thm}[Folding theorem]
Let $X$ and $Y$ be $r$-sets and let $x$ be a dismantlable vertex of $X$. We denote by $i$ the inclusion $X \setminus x \hookrightarrow X$. Then the following two maps are strong equivalences
$$i^*:\Hom(X, Y) \mapsto \Hom(X\setminus x,Y),$$
$$i_*:\Hom(Y,X \setminus x) \rightarrow \Hom(Y,X).$$
\end{thm}

\begin{rem}
Kozlov also proved that $i_*$ and $i^*$ are strong equivalences although he did not use the term.

Dochtermann pointed out that by the $\times$-homotopy theory, the folding theorem for $i^*$ yields that $i_*$ is a homotopy equivalence (Remark 6.3 of \cite{Doc}).
The folding theorem for $i^*$ is not deduced from his results of \cite{Doc} since he used it to prove (a part of) the proposition associated to Corollary 5.2.
\end{rem}

\begin{dfn}
An $r$-set is {\it stiff} if it has no dismantlable vertex.
\end{dfn}

\begin{lem}
Let $X$ be a stiff $r$-set and let $f:X \rightarrow X$ be a homomorphism. If $f \sim_s {\rm id}_X$ then $f= {\rm id}_X$.
\end{lem}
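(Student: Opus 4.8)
The plan is to use Proposition 4.4 to turn the abstract hypothesis $f \simeq_S \mathrm{id}_X$ into a concrete ``fence'' of maps $X \to X$ joined by explicit multi-maps, and then to propagate the equality with the identity along this fence using minimality of $X$.

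By Proposition 4.4(3) there is $n \in \mathbb{N}$ and a map $H : I_n \times X \to X$ of $r$-sets with $H(0,x) = f(x)$ and $H(n,x) = x$ for all $x \in V(X)$. For $k \in [n]$, restricting $H$ along the inclusion $\{k\} \times X \hookrightarrow I_n \times X$ (and the isomorphism $\{k\} \times X \cong X$) gives a map of $r$-sets $h_k : X \to X$, with $h_0 = f$ and $h_n = \mathrm{id}_X$. For each $k \in \{1,\dots,n\}$, the subset $\{k-1,k\}$ is an induced $r$-subset of $I_n$ with $\{k-1,k\}^r \subset R(I_n)$, hence isomorphic to $\Sigma_1$; restricting $H$ to $\{k-1,k\} \times X \cong \Sigma_1 \times X$ therefore produces a $1$-simplex $\Sigma_1 \times X \to X$ of $\Sing(X,X)$ whose two endpoints are $h_{k-1}$ and $h_k$. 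Unwinding the definition of $R(\Sigma_1 \times X)$, this $1$-simplex is exactly the assertion that $x \mapsto \{h_{k-1}(x),h_k(x)\}$ is a multi-map from $X$ to $X$; denote it $\eta_k$. By construction $h_{k-1} \le \eta_k$ and $h_k \le \eta_k$ in $\Hom(X,X)$.

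The key step is then the following claim: \emph{if $X$ is minimal, $g : X \to X$ is a map of $r$-sets, and $\eta \in \Hom(X,X)$ satisfies $\mathrm{id}_X \le \eta$ and $g \le \eta$, then $g = \mathrm{id}_X$.} To prove it, suppose $g(x_0) = y_0 \ne x_0$ for some $x_0 \in V(X)$; I will show $x_0$ is a beat point of $X$ witnessed by $y_0$, contradicting minimality. Let $i \in \{1,\dots,r\}$ and let $(x_1,\dots,x_r) \in R(X)$ with $x_i = x_0$. Since $\eta$ is a multi-map, $\eta(x_1) \times \cdots \times \eta(x_r) \subset R(X)$. From $\mathrm{id}_X \le \eta$ we get $x_j \in \eta(x_j)$ for every $j$, and from $g \le \eta$ we get $y_0 = g(x_0) \in \eta(x_i)$; hence $(x_1,\dots,x_{i-1},y_0,x_{i+1},\dots,x_r) \in \eta(x_1) \times \cdots \times \eta(x_r) \subset R(X)$, which is precisely the defining condition for $x_0$ to be a beat point.

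Finally I would run a downward induction on $k$ from $n$ to $0$ to conclude $h_k = \mathrm{id}_X$ for all $k$: the case $k = n$ holds by construction, and if $h_k = \mathrm{id}_X$ then $\mathrm{id}_X = h_k \le \eta_k$ and $h_{k-1} \le \eta_k$, so the key step forces $h_{k-1} = \mathrm{id}_X$. In particular $f = h_0 = \mathrm{id}_X$, as desired. I expect the only delicate point to be the bookkeeping identifying the restriction of $H$ to $\{k-1,k\} \times X$ with a multi-map dominating both $h_{k-1}$ and $h_k$ (i.e.\ correctly matching the definition of $R(\Sigma_1 \times X)$ with the multi-map condition); the minimality input itself is just the short beat-point computation above.
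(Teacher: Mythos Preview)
Your proof is correct, and the beat-point computation in your ``key step'' is exactly the heart of the paper's argument. The paper, however, packages it more economically: rather than invoking Proposition~4.4 to produce a fence $h_0,\dots,h_n$ and then running an induction, it simply observes that if $X$ is minimal then there is no $\eta \in \Hom(X,X)$ with $\mathrm{id}_X < \eta$ (your computation establishes precisely this, with $g$ playing the role of the witness $y$). Since $\mathrm{id}_X$, being a map of $r$-sets, is already a minimal element of $\Hom(X,X)$, it is therefore both maximal and minimal, hence an isolated point of the poset; its connected component is the singleton $\{\mathrm{id}_X\}$, and $f \simeq_S \mathrm{id}_X$ forces $f = \mathrm{id}_X$ immediately. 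Your route through $I_n$ and the explicit multi-maps $\eta_k$ is perfectly valid and has the minor virtue of making every comparison concrete, but the detour through Proposition~4.4 and the downward induction becomes unnecessary once you notice that ``$\mathrm{id}_X$ is maximal in $\Hom(X,X)$'' already finishes the argument in one line.
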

\begin{proof}
Suppose that there is a multi-homomorphism $\eta \in \Hom(X,X)$ such that $\eta > {\rm id}_X$. Let $x$ be a vertex of $X$ such that $\eta (x) \neq \{ x\}$, and let $y \in \eta(x) \setminus \{ x\}$. Let $i \in \{ 1,\cdots, r\}$ and $(x_1,\cdots, x_{i-1},x,x_{i+1},\cdots,x_r) \in R(X)$. Since
$$(x_1,\cdots,x_{i-1},y,x_{i+1},\cdots, x) \subset \eta (x_1) \times \cdots \times \eta(x) \times \cdots \times \eta(x_r) \subset R(Y).$$
Hence $x$ is dismantlable.
\end{proof}

\begin{cor}
A homomorphism $f$ between stiff $r$-sets is a strong equivalence if and only if $f$ is an isomorphism.
\end{cor}

\begin{dfn}
A homomorphism $i:X' \rightarrow X$ between $r$-sets is a {\it weak core} of $X$ if $i$ is a strong equivalence and $X'$ is stiff.

An $r$-subset $X'$ is a strong deformation retract of $X$ if there is a homomorphism
$$H:X \times I_n \rightarrow X$$
such that $H(x',i) = x'$ for each $x' \in V(X')$ and $i \in \{ 0,1,\cdots, n\}$, $H(x,0) = x$ $(x \in V(X))$, and $H(x,n) \in V(X')$ $(x \in V(X))$. The $r$-subset $X'$ is a {\it core} of $X$ if $X'$ is stiff and is a strong deformation retract of $X$.
\end{dfn}

Proposition 5.3 implies that the inclusion of a core of $X$ is a weak core. The following example shows that core and weak core are strictly different notions.

\begin{eg}
The following graph is found in Example 6.7 in \cite{Doc}.
\begin{center}
\begin{picture}(100,40)(0,-5)
\put(20,10){\circle*{2}} \put(40,10){\circle*{2}} \put(60,10){\circle*{2}} \put(80,10){\circle*{2}} \put(60,30){\circle*{2}}
\put(20,10){\line(1,0){60}} \put(20,10){\line(2,1){40}} \put(60,10){\line(0,1){20}} \put(60,30){\line(1,-1){20}}
\put(20,8){\circle{4}} \put(40,8){\circle{4}} \put(60,8){\circle{4}} \put(80,8){\circle{4}}
\put(78,-3){$a$}
\end{picture}
\end{center}
Let $G$ be a graph (2-uniform hypergraph) described above. Then the map $\Sigma_0 \rightarrow G$, $0 \mapsto a$ is a weak core but is not a core. 
\end{eg}

\begin{lem}
Let $i:X' \rightarrow X$ be a weak core of $X$. Then $i$ has a retraction.
\end{lem}
\begin{proof}
Let $r :X \rightarrow X'$ be a homomorphism such that $ri \sim_s {\rm id}_{X'}$. Since $X'$ is stiff, we have that $ri$ is the identity. 
\end{proof}

By Lemma 5.5, if $V(X)$ is finite then $X$ has a core. However,there is  an $r$-set having no weak core. Indeed, there is no weak core of the 2-set $I_{\infty}$ defined by $V(I_{\infty}) = \mathbb{N}$ and $R(I_\infty) = \{ (x,y) \; | \; |x-y| \leq 1\}$.

\begin{lem}
Let $X$, $Y$ be $r$-sets which are strongly homotopy equivalent. If $X$ has a weak core, then $Y$ also has a weak core.
\end{lem}
\begin{proof}
Let $i:X' \rightarrow X$ be a weak core and let $f:X \rightarrow Y$ be a strong homotopy equivalence. By definition $fi$ is a weak core of $Y$.
\end{proof}

\begin{thm}
Let $X,Y$ be $r$-sets and let $i:X' \rightarrow X$ and $j:Y' \rightarrow Y$ be weak cores. Then $X$ and $Y$ are strong homotopy equivalent if and only if $X'$ and $Y'$ are isomorphic.
\end{thm}
\begin{proof}
Suppose that $X'$ and $Y'$ are isomorphic and let $\varphi:X' \rightarrow Y'$ be an isomorphism. Let $r:X \rightarrow X'$ be a retraction of $i:X' \rightarrow X$. Then $j \varphi r : X \rightarrow Y$ is a strong equivalence.

On the other hand, suppose that $X$ and $Y$ are strong homotopy equivalent. Let $f:X \rightarrow Y$ be a strong equivalence. Let $i:X' \rightarrow X$ and $j : Y' \rightarrow Y$ be cores, and let $r:Y \rightarrow Y'$ be a retraction of $j$. Then $rfi:X' \rightarrow Y'$ is a strong equivalence between stiff $r$-sets, and is an isomorphism.
\end{proof}

\begin{cor}
Let $X$ be an $r$-set. If $i_0:X'_0 \rightarrow X$ and $i_1: X'_1 \rightarrow X$ are weak cores, then $X_0'$ and $X_1'$ are isomorphic.
\end{cor}

\end{document}